\documentclass{siamart220329}

\makeatletter
\def\refstepcounter@optarg[#1]#2{\refstepcounter@noarg{#2}}
\makeatother

\usepackage{amssymb}
\usepackage{bm}
\usepackage{algorithm}
\usepackage{algpseudocode}
\usepackage{graphicx}
\graphicspath{{figures/}{./}}
\usepackage{cite}
\usepackage{xurl}
\usepackage{hyperref}

\def\eq#1{Eq.~\eqref{eq:#1}}
\def\eqs#1{Eqs.~\eqref{eq:#1}}
\def\eqn#1{\eqref{eq:#1}}
\def\figref#1{Fig.~\ref{fig:#1}}

\newcommand{\Real}{\mathbb{R}}
\newcommand{\bfell}{\boldsymbol{\ell}}  % boldsymbol?
\newcommand{\bfL}{\mathbf{L}}

\newcommand{\bfD}{\mathbf{D}}
\newcommand{\bfd}{\mathbf{d}}

\newcommand{\bfA}{\mathbf{A}}
\newcommand{\bfx}{\mathbf{x}}
\newcommand{\bfy}{\mathbf{y}}
\newcommand{\bfb}{\mathbf{b}}

\newcommand{\bfB}{\mathbf{B}}
\newcommand{\bfC}{\mathbf{C}}
\newcommand{\bfX}{\mathbf{X}}
\newcommand{\bfY}{\mathbf{Y}}
\newcommand{\bfalpha}{\boldsymbol{\alpha}}
\newcommand{\bfh}{\mathbf{h}}
\newcommand{\bfH}{\mathbf{H}}
\newcommand{\bfv}{\mathbf{v}}
\newcommand{\bfg}{\mathbf{g}}
\newcommand{\bfQ}{\mathbf{Q}}
\newcommand{\bfLambda}{\boldsymbol{\Lambda}}
\DeclareMathOperator{\sgn}{sgn}

\begin{document}

\headers{Hemiplex Cholesky factorization of symmetric matrices}{A.~Edelman and T.~E.~Holy}

\title{Jordan algebras, hemiplex numbers, and the Cholesky decomposition of arbitrary symmetric matrices\thanks{\funding{This work was supported by the National Institutes of Health R01DC020034 (TEH); the U.S. National Science Foundation (awards CNS-2346520, RISE-2425761, DMS-2325184), the Defense Advanced Research Projects Agency (Agreement HR00112490488), the Department of Energy National Nuclear Security Administration (award DE-NA0004266), and the U.S. Air Force Research Laboratory (Cooperative Agreement FA8750-19-2-1000) (all AE). Any opinions, findings, conclusions, or recommendations expressed are those of the authors and do not necessarily reflect the views of the U.S. Government or any agency thereof, which assumes no liability for the information contained herein. Reference to any specific commercial product, process, or service does not constitute endorsement by the U.S. Government.}}}

\author{Alan Edelman\thanks{Department of Mathematics, Massachusetts Institute of Technology, Cambridge, MA (\email{edelman@mit.edu}).}
\and Timothy E. Holy\thanks{Department of Neuroscience, Washington University School of Medicine, St.~Louis, MO (\email{holy@wustl.edu}). Corresponding author.}}

\maketitle

\begin{abstract}
Positive-semidefinite matrices are most efficiently factored using the Cholesky decomposition.  For indefinite matrices, the Cholesky factorization does not exist, and the alternatives face greater challenges in achieving numeric stability and preservation of banded structure.  Here we pursue an analogy between the requirement for positive-semidefinite matrices and the solution of the quadratic equation $x^2 = c$ for $c \le 0$.  It is shown that a non-associative algebra, 
called the hemiplex numbers, allows the Cholesky factorization to be computed for arbitrary symmetric matrices. Crucially, the hemiplex Cholesky factorization does not require pivoting for its existence or stability, allowing it to preserve banded structure. For singular matrices it produces a parametrization of the null space, and provides opportunity for truncation of nearly-null directions in a manner similar to common usage of the singular value decomposition.  The hemiplex Cholesky factorization may be a practically useful addition to the tools for solving symmetric linear equations.
\end{abstract}

\begin{keywords}
Cholesky factorization, Jordan algebra, hemiplex numbers, indefinite symmetric matrices, non-associative algebra, null space
\end{keywords}

\begin{MSCcodes}
15A23, 65F05, 17C50, 65F30, 15B57
\end{MSCcodes}

\section{Introduction}

For linear systems
\begin{equation}
\bfA \bfx = \bfb,
\end{equation}
where $\bfA$ is a symmetric positive-definite matrix, the most widely-used method of solution is via the Cholesky decomposition $\bfA = \bfL\bfL^T$ where $\bfL$ is a lower triangular matrix.  This matrix factorization, reviewed in section~\ref{sec:Cholesky}, can be computed more efficiently than alternative decompositions such as LU, QR, or the singular value decomposition (SVD).  For sparse matrices, a major advantage of this triangular decomposition is its preservation of banded structure\cite{Martin1965}, making computation of the factorization much more efficient.

When $\bfA$ is not positive-semidefinite, the standard Cholesky factorization does not exist.  Two widely-used alternatives include $LDL^T$ factorization, which introduces a diagonal matrix $\bfD$ with elements of either sign, and the Bunch-Kaufman $\bfL\bfB\bfL^T$ factorization which allows block-diagonal $\bfB$ with blocks of size $1\times 1$ or $2\times 2$.  The Bunch-Kaufman factorization \cite{Bunch1977} exists for any symmetric matrix and, with suitable pivoting, has good stability properties \cite{Higham1997}.  However, this factorization does not preserve banded structure\cite{Bunch1977} (but see \cite{Irony2006,Kaufman2007}).
The $LDL^T$ factorization does preserve such structure, but as is well known (and recapped below) it does not exist for all matrices.  The numeric stability of $LDL^T$ decomposition has been the subject of much investigation (e.g., \cite{Gill1996,Saunders1996,Gould2007,Arie2009}).

There is an analogy between Cholesky factorization and the fact that the quadratic equation $x^2 = c$ lacks real-valued solutions for $c < 0$.  The desirability of ``solving'' this equation is motivation for introducing the complex numbers $a+b\imath$ with $\imath^2 = -1$.  It will be shown that the Cholesky factorization is amenable to similar treatment.

\section{The Cholesky and $LDL^T$ factorizations}
\label{sec:Cholesky}

For a symmetric $K\times K$ real-valued matrix
\begin{equation}
\bfA = \begin{bmatrix}
A_{11} & A_{12} & A_{13} & \ldots \\
A_{21} & & & \\
A_{31} & & \bfA_2 & \\
$\vdots$ & & & \\
\end{bmatrix},
\end{equation}
where $\bfA_2$ is a symmetric $(K-1)\times(K-1)$ matrix, one computes the Cholesky factorization by seeking a $K$-vector $\bfell$ for which $\bfA - \bfell \bfell^*$ ($\bfell^*$ denoting the adjoint of $\bfell$) cancels the first column and row of $\bfA$.  The equations that $\bfell$ must satisfy are given by
\begin{equation}
\label{eq:Cholesky}
\begin{aligned}
\ell_1\bar \ell_1 &= A_{11}; \\
\ell_2\bar \ell_1 &= A_{21}; \\
\ell_3\bar \ell_1 &= A_{31}; \\
\ldots &
\end{aligned}
\end{equation}
where $\bar x$ denotes the conjugate of $x$. (Conventionally, if $\bfA \in \Real^{K\times K}$, $\bfell \in \Real^K$ so conjugation is the identity, but for reasons that will become clear in the next section we will use notation that remains valid for complex numbers.) After solving these equations, we take
\begin{equation}
  \label{eq:schur}
  \bfA_2 \leftarrow \bfA_2 - \tilde\bfell\tilde\bfell^*,
\end{equation}
where $\tilde\bfell$ is $\bfell$ with the first entry omitted, and the process continues with the next row/column.  This results in a lower-triangular $\bfL$ created by assembling the vectors $\bfell$.

The first of \eqs{Cholesky} requires that $A_{11} \ge 0$.
Moreover, if $A_{11} = 0$ but $A_{21} \neq 0$, the second equation cannot be solved for any finite $A_{21}$.

These requirements are relaxed by the $LDL^T$ factorization, constructed by finding $d \bfell\bfell^*$ such that
\begin{equation}
\label{eq:LDLT}
\begin{aligned}
d \ell_1\bar\ell_1 &= A_{11}; \\
d \ell_2\bar\ell_1 &= A_{21}; \\
d \ell_3\bar\ell_1 &= A_{31}; \\
\ldots &
\end{aligned}
\end{equation}
While this allows $A_{11}$ to be of either sign, this does nothing to correct the problems that ensue from $A_{11} = 0$, because the first equation implies that $d\ell_1 = 0$ and this same product appears in each successive equation.  Symmetric pivoting strategies can be used to shift nonzero diagonals up to the first entry, but a matrix such as
\begin{equation}
\bfA = \begin{bmatrix}
0 & 1 \\
1 & 0
\end{bmatrix},
\end{equation}
even though it is non-singular, lacks an $LDL^T$ decomposition.

\section{The hemiplex numbers}

To solve these apparently-unsolvable equations, we ``invent'' a type of number with the necessary properties. (In reality, it is a rediscovery of one of the Jordan algebras\cite{jordan1933}, see Appendix~\ref{sec:Jordan}; this connection motivates the symbol $\mathbb{J}$.) We define a hemiplex number $z \in \mathbb{J}$ as one which may be written
\begin{equation}
\label{eq:hemiplex-number}
z = a + m \mu + n \nu,
\end{equation}
where $a, m, n \in \Real$. Any such number with $a = 0$ is called ``pure-hemi'' and denoted $z \in \mathbb{J}_0$. Addition and scalar multiplication of hemiplex numbers
follow the ordinary vector-space rules.  Multiplication is defined by giving the special numbers $\mu$ and $\nu$ the following algebraic properties:
\begin{equation}
\label{eq:hemiplex-multiply}
\mu^2 = 0, \qquad \nu^2 = 0, \qquad \mu\nu = \nu\mu = -\frac{1}{2}.
\end{equation}
As a consequence, both addition and multiplication are commutative.  Hemiplexes with either $m=0$ or $n=0$ are equivalent to dual numbers, but $\mu\nu = -1/2$ ensures that these numbers differ from the algebra of dual numbers with two generators: the $\mu$, $\nu$ terms are not infinitesimal.
We define the conjugate of a hemiplex number $z = a + m \mu + n \nu$ as
\begin{equation}
\label{eq:hemiplex-conjugate}
\bar z = a - m \mu - n \nu.
\end{equation}
This is the natural definition of conjugation:
\begin{lemma}
Let numbers $z \in \mathbb{J}$ be defined as \eq{hemiplex-number} with multiplication rules \eq{hemiplex-multiply}. \eq{hemiplex-conjugate} is the only linear map $z \rightarrow \bar z$ that acts as the identity operator on the reals, $\bar 1 = 1$, and satisfies $z \bar z \in \Real \: \forall \, z\in\mathbb{J}$.
\begin{proof}
   Let
   \begin{equation}
   \begin{aligned}
       \bar \mu &= \alpha + \beta\mu + \gamma\nu; \\
       \bar \nu &= \alpha' + \beta'\mu + \gamma'\nu
   \end{aligned}
   \end{equation}
   Then $\bar z = A + M\mu + N\nu$ with
   \begin{equation}
   \begin{aligned}
       A &= a + m\alpha + n\alpha'; \\
       M &= m\beta + n\beta'; \\
       N &= m\gamma + n\gamma'.
   \end{aligned}
   \end{equation}
   From this, requiring that
   \begin{equation}
       z \bar z = aA - \frac{mN + nM}{2} + (aM+mA)\mu + (aN+nA)\nu
   \end{equation}
   be real-valued requires
   \begin{equation}
       \begin{aligned}
           0 &= aM+mA = am(1+\beta) + an\beta' + m^2\alpha + mn\alpha'; \\
           0 &= aN+nA = an(1+\gamma') + am\gamma + nm\alpha + n^2\alpha'.
       \end{aligned}
   \end{equation}
   Requiring that these hold for all choices of $a$, $m$, and $n$ implies $\beta = \gamma' = -1$ and $\alpha = \alpha' = \beta' = \gamma = 0$.
\end{proof}
\end{lemma}

Because $|z|^2 = z \bar z = a^2 + mn$,  $|z|^2 = 0$ does not imply that $z$ is zero; any hemiplex number satisfying $a^2 + mn = 0$ with at least one nonzero among $a$, $m$, and $n$ is a divisor of zero without being zero. The hemiplexes are therefore not a real division algebra, which is not surprising given that there are only three such algebras (reals, complexes, and quaternions)\cite{frobenius1878lineare}.

Most hemiplexes $z\in\mathbb{J}$ have a unique multiplicative inverse: $1 = zw$ for $w = r + p\mu + q\nu$ requires zeroing the $\mu$ and $\nu$ coefficients of the product,
\begin{equation}
\label{eq:zeromunu}
\begin{aligned}
    ap + mr &= 0\\
    aq + nr &= 0.
\end{aligned}
\end{equation}
When $a \neq 0$ we may solve these for $p$ and $q$ in terms of $r$. Requiring the real-valued term to be one results in
\begin{equation}
r = a/|z|^2, \, p = -m/|z|^2, \, q = -n/|z|^2
\end{equation}
which is valid if $|z|^2 \neq 0$. Thus, we have
\begin{equation}
\label{eq:inverse}
    z^{-1} = \frac{\bar z}{|z|^2}
\end{equation}
just as with complex numbers. Note that $z^{-1}$ has the same $\mu/\nu$ ratio as $z$:
\begin{equation}
    \frac{-m/|z|^2}{-n/|z|^2} = \frac{m}{n},
\end{equation}
and $p/r = -m/a$, $q/r = -n/a$ so these ratios are also preserved except for a sign-flip. 

\eqs{zeromunu} can be satisfied trivially if $a = r = 0$, implying that any pure-hemi number $z_0 = m\mu + n\nu \in \mathbb{J}_0$ with $m \neq 0$ and/or $n\neq 0$ possesses an infinite number of pure-hemi multiplicative inverses $w$:
\begin{equation}
    1 = z_0w = wz_0 \textrm{ for } z_0 = m\mu + n\nu, w = p\mu + q\nu \qquad \Rightarrow \qquad mq + np = -2,
\end{equation}
which has a one-dimensional family of solutions.
%While division is therefore not unique, we define \emph{symmetric division} of $c \in \Real$ by a pure-hemi number $z = m\mu + n\nu$ as
% \begin{equation}
% \label{eq:symmetric_division}
% \frac{c}{z} = -\frac{c}{n}\mu - \frac{c}{m}\nu.
% \end{equation}
%Note that symmetric division preserves the $m/n$ ratio when neither $m$ nor $n$ is zero.  
For pure-hemis with $mn \neq 0$, we may adopt \eq{inverse} as a convention, thus defining \emph{symmetric division}, while recognizing that there are other inverses available. 

For the purposes of resolving the conundrum of section~\ref{sec:Cholesky}, the most salient property of the hemiplexes is that the equations
\begin{equation}
\label{eq:singularities}
\begin{aligned}
|\ell_1|^2 &= 0; \\
\ell_2\bar \ell_1  &= c, \qquad c \in \Real, \, c\neq 0
\end{aligned}
\end{equation}
are solved by
\begin{equation}
  \label{eq:handling-zeros}
\begin{aligned}
\ell_1 &= m\mu \\
\ell_2 &= p\mu + q\nu
\end{aligned}
\end{equation}
for arbitrary $p\in\Real$ and any $m$, $q\in\Real$ satisfying $mq = 2c$.  ($\mu$ and $\nu$ are symmetric, so of course there is a second family of solutions with $\ell_1 = n\nu$.) Among the real numbers, attempting to solve these equations requires $\ell_1 \rightarrow 0$, $\ell_2\rightarrow\infty$.  Consequently, hemiplexes allow one to replace expressions that involve singularities with operations on finite, well-scaled numbers. While hemiplexes also permit solution of $|\ell_1|^2 = A_{11}$ for $A_{11} < 0$, which in principle makes $LDL^T$ factorization unnecessary, numerical stability may encourage us to avoid cancellation in computing $a^2 + mn$, and this might argue in favor of solving such cases by introducing a diagonal that stores the signs of the diagonals as they are encountered. %This may be more convenient and numerically-stable than introducing a small nonzero real element $\epsilon$ and studying the properties of the system as $\epsilon\rightarrow 0$.

Perhaps the most unusual property of the hemiplexes is that multiplication is non-associative:  $(\mu\nu)\nu = -\nu/2$ but $\mu(\nu\nu) = 0$.  The necessity of abandoning associativity follows directly from \eqs{singularities}:
\begin{lemma}
Let $\cal A$ be a commutative algebra equipped with a homomorphism $x \rightarrow \bar x$. Let $x, y \in {\cal A}$ satisfy
\begin{subequations}
    \begin{align*}
        x\bar x &= 0; \\
        y\bar x &= c
    \end{align*}
\end{subequations}
for $c\in\Real$ with $c \neq 0$.
Then multiplication within ${\cal A}$ cannot be associative.
\end{lemma}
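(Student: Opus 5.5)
Assume, for contradiction, that multiplication in $\cal A$ is associative, and evaluate one triple product in two ways. Since $\cal A$ is a commutative algebra over $\Real$ (with scalars commuting with everything), associativity together with commutativity lets us freely regroup and reorder any finite product. The natural object is $y\bar x x$: it contains $\bar x x = x\bar x = 0$, and it also contains $y\bar x = c$.

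First grouping:
\begin{equation*}
(y\bar x)\,x = c\,x .
\end{equation*}
Second grouping, using associativity and then commutativity:
\begin{equation*}
y(\bar x x) = y(x\bar x) = y\cdot 0 = 0 .
\end{equation*}
Associativity would force $c\,x = 0$. Because $c \neq 0$ is a real scalar, we can multiply by $c^{-1}$ and conclude $x = 0$.

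It remains to show that $x = 0$ is impossible. If $x = 0$, then $\bar x = \bar 0 = 0$, since a homomorphism (in particular a linear map) sends $0$ to $0$. Then $y\bar x = y\cdot 0 = 0 \neq c$, contradicting the hypothesis. Hence multiplication in $\cal A$ cannot be associative.

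The main point of care is bookkeeping. One must make sure each equality uses only commutativity, bilinearity (so that $y\cdot 0 = 0$ and real scalars factor out), and the single associativity step being refuted, and that the step $cx = 0 \Rightarrow x = 0$ relies only on $c$ being an invertible real scalar rather than on any division in $\cal A$. That last point matters because $\cal A$ may have zero divisors, as $\mathbb{J}$ does, so cancellation by elements of $\cal A$ itself is not available. Note also that the homomorphism property of conjugation is only needed to get $\bar 0 = 0$.

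As a sanity check, \eq{handling-zeros} gives $(\ell_2\bar\ell_1)\ell_1 = c\,m\mu \neq 0$ while $\ell_2(\bar\ell_1\ell_1) = 0$. This is exactly the failure exhibited by $(\mu\nu)\nu \neq \mu(\nu\nu)$.
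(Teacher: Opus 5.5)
Your proof is correct and takes the same route as the paper: you evaluate $(y\bar x)x$ both as $cx$ and, using associativity and commutativity, as $y(x\bar x)=0$. Your extra step ruling out $x=0$ (then $\bar x=0$, so $y\bar x=0\neq c$) spells out what the paper compresses into ``implying that $c=0$''; taken literally, that regrouping only yields $cx=0$.
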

\begin{proof}
Right-multiply the second equation by $x$, resulting in $(y \bar x)x = cx$.  If multiplication were associative, we could regroup this as $y (\bar xx) = cx$.  By commutivity and the first equation, $\bar xx = x\bar x = 0$, implying that $c=0$ which conflicts with $c\neq 0$.
\end{proof}

For numerical work, the lack of associativity may be less of a problem than it may initially seem: if all multiplications are performed as they are needed, there is no need to keep track of grouping.

To gain more insight about whether hemiplex numbers might be used to compute stable Cholesky factorizations for arbitrary symmetric matrices, consider a perturbed variant of  \eqs{singularities},
\begin{equation}
\begin{aligned}
|\ell_1|^2 &= \epsilon; \\
\ell_2\bar \ell_1  &= 1
\end{aligned}
\end{equation}
for small $|\epsilon|$. Given $\epsilon \neq 0$, the first equation implies that $\bar \ell_1$ possesses an inverse of the form in \eq{inverse}, so we might be tempted to write
\begin{equation}
    \ell_2 = \bar\ell_1^{-1} = \frac{\ell_1}{|\ell_1|^2} = \frac{\ell_1}{\epsilon}.
\end{equation}
Since we also have $|\ell_1|^2 = \epsilon$, the best scaling we can obtain for $\ell_2$ is that it blows up as $1/\sqrt{|\epsilon|}$. This would not result in a stable algorithm. But hemiplexes offer a potential escape: if we make $\ell_1$ pure-hemi, we are free to make a different choice from \eq{inverse} when calculating $\ell_2$ from $\ell_1$. Thus, if we are to obtain fundamentally new capabilities from unconventional algebras, we need to exploit circumstances in which numbers have more than one multiplicative inverse, and for the hemiplexes that means restricting ourselves to factorizations that are pure-hemi.
% One stable factorization inverts the $\mu$/$\nu$ ratio
% \begin{equation}
% \begin{aligned}
% \ell_1 &= (\epsilon/2)\mu + \nu \\
% \ell_2 &= \frac{\mu + (\epsilon/2) \nu}{\sqrt{1 + \epsilon^2/4}}.
% \end{aligned}
% \end{equation}
% The limit of this solution as $\epsilon \rightarrow 0$ is consistent with \eqs{handling-zeros}.

% As will be seen, the tradeoff in stabilizing the factorization in this manner is that forward substitution 

\section{Example factorizations}
\label{sec:examples}

The Cholesky factorization over pure-hemi numbers is not unique. For example, the $2\times 2$ identity has the following factorizations:
\begin{align}
  \begin{bmatrix} 1 & 0 \\ 0 & 1 \end{bmatrix} &= \bfL \bfL^* \\
  \textrm{with } \bfL &= \begin{bmatrix} \mu+\nu & 0 \\ 0 & \mu+\nu \end{bmatrix} \textrm{ or } \begin{bmatrix} 5\mu+\frac{1}{5}\nu & 0 \\ 0 & \frac{1}{3}\mu+3\nu \end{bmatrix} \textrm{ or } \begin{bmatrix} -2\mu-\frac{1}{2}\nu & 0 \\ 0 & \frac{1}{\epsilon}\mu+\epsilon\nu \end{bmatrix}.
\end{align}

The following $\bfL\bfD\bfL^T$ factorization:
\begin{equation}
  \label{eq:badldlt}
  \begin{bmatrix} -\epsilon & 1 \\ 1 & 1 \end{bmatrix} = \begin{bmatrix} 1 & 0 \\ -\frac{1}{\epsilon} & 1 \end{bmatrix} \begin{bmatrix} -\epsilon & 0 \\ 0 & 1+\frac{1}{\epsilon} \end{bmatrix} \begin{bmatrix} 1 & -\frac{1}{\epsilon} \\ 0 & 1 \end{bmatrix}
\end{equation}
is not numerically stable as $\epsilon\rightarrow 0$\cite{Van95}, demonstrating that $\bfL\bfD\bfL^T$ factorization over the reals is not stable for indefinite matrices.  Two of the possible pure-hemi Cholesky factorizations are:
\begin{align}
  \bfL &= \begin{bmatrix} \sqrt{\epsilon}(\mu-\nu) & 0 \\ \frac{2}{\sqrt{\epsilon}}\nu & \mu+\nu \end{bmatrix}; \\
  \bfL &= \begin{bmatrix} \mu - \epsilon\nu & 0 \\ \frac{2(-\epsilon\mu + \nu)}{1+\epsilon^2} & \sqrt{1+\frac{4\epsilon}{(1+\epsilon^2)^2}} (\mu+\nu) \end{bmatrix}. \label{eq:example2}
\end{align}
The former is not stable, but the latter converges to a finite limit,
\begin{equation}
  \bfL \rightarrow  \begin{bmatrix} \mu & 0 \\ 2\nu & \mu+\nu \end{bmatrix},
\end{equation}
demonstrating that the hemiplex factorization is capable of gracefully handling near-zeros along the diagonal.

% L = [(-ϵ*μ + ν)/sqrt(2) 0; sqrt(2)/(1+ϵ^2)*(μ - ϵ*ν)  sqrt(1+4ϵ/(1+ϵ^2)^2)*(μ+ν)/sqrt(2)]

The matrix
\begin{equation}
\bfA = \begin{bmatrix}
 0 & 1 \\
 1 & 0
\end{bmatrix}
\end{equation}
does not have an $LDL^T$ decomposition, but it does have Cholesky factorizations over the hemiplexes; one choice is
\begin{equation}
  \label{eq:example3}
  \bfL = \begin{bmatrix}
  \mu &   0 \\
  2\nu & \mu
\end{bmatrix}.
\end{equation}

\section{A concrete hemiplex Cholesky factorization}

Given that the Cholesky factorization over the hemiplexes is not unique, developing practical algorithms requires that one makes some choices.  The examples of section~\ref{sec:examples} show that our conventions may have consequences for numeric stability.  Without making any claim of optimality, this manuscript explores one particular choice which appears to have useful properties.  To motivate this factorization, it is helpful to look ahead to the process of using the Cholesky factorization for solving linear equations.

\subsection{The $\mu/\nu$ ratio during backward-substitution}

The expression
\begin{equation}
\bfL \bfL^* \bfx = \bfb
\end{equation}
is equivalent to the pair
\begin{equation}
\bfL \bfy = \bfb, \qquad \bfL^* \bfx = \bfy.
\end{equation}
We have to solve the forward-substitution equations
\begin{equation}
\label{eq:forward}
L_{ii}y_i = g_i = b_i - \sum_{j=1}^{i-1} L_{ij}y_j
\end{equation}
and the backward-substitution equations
\begin{equation}
\label{eq:backward}
\bar L_{ii}x_i = h_i = y_i - \sum_{j=i+1}^K \bar L_{ji}x_j,
\end{equation}
where $x_i$ must be real.

Because $b_i\in\Real$ and $L_{ij}$ is pure-hemi, $y_i$ is pure-hemi and the $g_i$ are real-valued.  In solving \eq{forward} we have to choose among infinitely many possible solutions; in contrast, \eq{backward} is of the form $(m_L\mu+n_L\nu)x_i = m_h\mu + n_h\nu$, for which a real-valued solution exists only if $\frac{m_h}{n_h} = \frac{m_L}{n_L}$. Stated differently, there are no choices to be made during backward-substitution, but the choices made during forward-substitution determine whether the backward-substitution equations are solvable.

The most straightforward option would be to keep the $\mu/\nu$ ratio constant within columns of $\bfL$, as this ensures that \eq{backward} will have a real-valued solution.  However, this choice corresponds to division rule \eq{inverse} and does not encompass solutions like \eq{handling-zeros}, which is the main motivation for considering the hemiplexes.

Perhaps the next most straightforward option is to keep the $\mu/\nu$ ratio constant for all $L_{ij}$ with $i > j$, so that the $\bar L_{ji}x_j$ sum in \eq{backward} has predictable $\mu/\nu$ ratio.  We will therefore adopt this constraint.  %This will place restrictions on our choice for the $\mu/\nu$ ratio of $y_i$ during forward substitution. % and will be addressed in section~\ref{sec:solving}.

\subsection{Calculating the factorization}
\label{sec:decomposition}

This particular factorization convention is motivated by the observation that handling a zero along a diagonal, \eq{handling-zeros}, most simply requires that we choose ``opposite'' non-zero coefficients for $\mu$ and $\nu$ along the diagonal and sub-diagonal, respectively. This generalizes the choices made in the example \eq{example2} to arbitrary symmetric matrices. % To include this choice as the limit of a more general strategy, define the ratio
% \begin{equation}
%   \label{eq:ratio-definition}
%   d_i = \min\left(\frac{A_{ii}}{\max_{j>i} |A_{ji}|}, \sgn{A_{ii}}\right)
% \end{equation}
% during factorization of the $i$th column (so that here $A_{ji}$ already incorporates modifications from previous columns, \eq{schur}). Evidently $|d_i| \le 1$, and of course $d_i = 0$ if $A_{ii} = 0$ and at least one $A_{ji}$ is nonzero.  (If a column is all-zeros, we can choose $L_{ii} = \nu$ and all $L_{ji} = 0$ and move on to the next column.)  
We let $-1 \le d_j \le 1$ be the $\nu/\mu$ ratio for $L_{jj}$, and the $\mu/\nu$ ratio (note the inversion) for $L_{ij}$ with $i > j$:
\begin{subequations}
  \label{eq:component-ratio}
\begin{align}
  L_{jj} &= L_{jj\mu}(\mu + d_j \nu); \\
  L_{ij} &= L_{ij\nu}(d_j \mu + \nu) \qquad i > j.
  % L_{jj\mu} &= d_j L_{jj\nu}; \\
  % L_{ij\nu} &= d_j L_{ij\mu} \qquad i > j.
\end{align}
\end{subequations}
For the moment we defer specifying how $d_j$ is chosen.
We choose $L_{jj\mu}$ and $L_{ij\nu}$ to be finite, and $L_{jj\mu}$ is always non-zero.  ($L_{ij\nu}$ is zero if $A_{ij}$ is, which will enable efficient sparse factorizations.)
Concretely, from \eq{Cholesky} we have
\begin{subequations}
\begin{align}
  A_{jj} &= L_{jj\mu}^2|\mu + d_j\nu|^2 = L_{jj\mu}^2 d_j; \\
  A_{ij} &= L_{jj\mu}L_{ij\nu}(-\mu - d_j\nu)(d_j\mu + \nu) = L_{jj\mu}L_{ij\nu} \frac{1 + d_j^2}{2},
\end{align}
\end{subequations}
and thus we choose
\begin{subequations}
  \label{eq:L-definition}
\begin{align}
  L_{jj\mu} &= \begin{cases}
      1 & A_{jj} = 0;\\
      \sqrt{\frac{A_{jj}}{d_j}} & \textrm{otherwise};
  \end{cases} \\
  L_{ij\nu} &= \frac{2A_{ij}}{L_{jj\mu}(1+d_j^2)} \qquad i > j.
\end{align}
\end{subequations}
The other component of each entry of $L_{ij}$ is calculated from \eqs{component-ratio}.  After the conclusion of one column, we apply \eq{schur} and continue to the next column. We note that $d_j$ must have the sign of $A_{jj}$, and continuity in $L_{jj\mu}$ would require $d_j \rightarrow 0$ as $A_{jj} \rightarrow 0$.

Note that this factorization can be computed almost in-place in terms of purely real quantities: because of \eq{component-ratio}, one component is predictable from the other if we know $d_j$. Consequently we can store the real values of \eqs{L-definition} in place of $\bfA$, but we require separate storage for $d_j$.  Storing $\bfd$ is a cost of order $K$ for a $K\times K$ matrix.  We define $\bfL_\Real$ to be this real-valued lower triangular matrix,
\begin{subequations}
\begin{align}
  (\bfL_\Real)_{jj} &= L_{jj\mu}; \\
  (\bfL_\Real)_{ij} &= L_{ij\nu} \qquad i > j.
\end{align}
\end{subequations}

% How should we choose $d_i$? While we require $d_i \rightarrow 0$ as $A_{ii} \rightarrow 0$, for a symmetric positive definite matrix we might want $d_i = 1$ as this results in symmetric division. A fundamental property of such matrices is that $A_{ii} A_{jj} \ge A_{ji}^2$ for all $j > i$. Thus we adopt
% \begin{equation}
%     d_i = \sgn(A_{ii}) \min\left(1, \left\lbrace\frac{|A_{ii} A_{jj}|}{A_{ji}^2}\right\rbrace_{j>i}\right).
% \end{equation}
% This criterion is invariant under scale transformations of $\bfA$.

\subsection{Constraints from forward- and backward-substitution}

Substituting these choices into \eq{backward} results in the equation
\begin{equation}
  \label{eq:backward2}
  -L_{ii\mu}(\mu + d_i\nu) x_i - \sum_{j>i} L_{ji\nu}(d_i\mu+\nu)x_j = y_{i\mu}\mu + y_{i\nu}\nu.
\end{equation}
Define the real-valued quantity
\begin{equation}
  \label{eq:s-definition}
  s_i = \sum_{j>i} L_{ji\nu}x_j.
\end{equation}
Then solving \eq{backward2} implies
\begin{subequations}
\begin{align}
  L_{ii\mu}x_i + d_i s_i &= -y_{i\mu}; \label{eq:yconstr1} \\
  L_{ii\mu}d_i x_i + s_i &= -y_{i\nu},
\end{align}
\end{subequations}
and thus
\begin{equation}
  \label{eq:ycomponent-mu}
  y_{i\nu} = d_i y_{i\mu} + (d_i^2 - 1)s_i.
\end{equation}
Substituting this constraint into \eq{forward} yields
\begin{align}
  b_i &= L_{ii\mu} (\mu + d_i \nu)\left[y_{i\mu}\mu + d_i y_{i\mu}\nu + (d_i^2-1)s_i\nu \right] + \sum_{j<i}L_{ij\nu}(d_j\mu+\nu)\left[y_{j\mu}\mu + d_j y_{j\mu}\nu + (d_j^2-1)s_j\nu\right] \\
  &= -\frac{L_{ii\mu}}{2}\left(2d_iy_{i\mu} + (d_i^2-1)s_i\right) - \frac{1}{2}\sum_{j<i}L_{ij\nu}\left[(1+d_j^2)y_{j\mu} + d_j(d_j^2-1)s_j\right]. %\\
  % &= -L_{ii\nu}(1-d_i)^2\left(- y_{i\nu} + (1+d_i)s_i\right) - \sum_{j\le i}(\bfL_\Real)_{ij}\left[(1+d_j^2)y_{j\nu} + d_j(1-d_j^2)s_j\right].
\end{align}
Substitution of \eqs{yconstr1} and \eqn{s-definition} then allows this to be expressed as an equation in $\bfx$. Unfortunately, this is no longer a triangular system, so solution does not seem straightforward.
There is, however, at least one exception: if $d_j \in \{0, \pm 1\}$, the coefficient of $s_j$ in the sum over $j < i$ vanishes. Therefore, in our implementation we choose $d_j = \sgn(A_{jj})$.
In this case, forward-substitution is
\begin{equation}
\label{eq:forwardsubst-zeropiv}
    2L_{ii\mu}d_i y_{i\mu} = -2b_i + L_{ii\mu}(1 - d_i^2)s_i - \sum_{j<i}L_{ij\nu}(1+d_j^2)y_{j\mu}.
\end{equation}
This is a triangular system, but it still involves the ``unknowns'' (at the time of solution) $\{s_i\}$.  However, if there are a limited number of cases where $d_i = 0$, this reduces the solution of a large linear system to a much smaller linear system, one for which we seek the non-zero values of $s_i$ for the cases where $d_i = 0$.

% Therefore, here we explore the following choices:
% \begin{equation}
% \label{eq:Lconstruction}
% \begin{cases}
% m_i = n_i = \frac{A_{i1}}{\sqrt{2A_{11}}} & \textrm{when } A_{11} > 0; \\
% m_1 = 1, \, m_{i>1} = 0, \, n_i = A_{i1} & \textrm{when } A_{11} = 0; \\
% m_i = -n_i = \frac{A_{i1}}{\sqrt{2|A_{11}|}} & \textrm{when } A_{11} < 0.
% \end{cases}
% \end{equation}

\subsection{Handling zero pivots}
%We note that zero-pivots, where $d_i = 0$, may or may not correspond to singularities of $\bfA$. Thus, it should not be surprising that a general treatment has some flavor of nullspace analysis. 
We may formalize the result of \eq{forwardsubst-zeropiv} as

\begin{theorem}
\label{thm:singular}
Let $S$ denote the set of indexes $i$ with $d_i = 0$, and $i_p$ the $p$th such $i\in S$. Up to a possible correction $\bfb\rightarrow\bfb+\Delta\bfb$ restricted to the indexes $i\in S$, the solution $\bfx$ may be expressed as $\bfx = \tilde\bfx + \bfX\bfalpha$, where $\bfalpha$ is a vector of length $|S|$, and $\tilde\bfx$ and the $K\times |S|$ matrix $\bfX$ are solved by reference to only the $\mu$-component of \eq{backward} from $\tilde\bfy$ satisfying \eq{forward} and $\bfY$ lying in the null space of $\bfL$ with $Y_{i_p q} = \delta_{pq}\mu$.
\end{theorem}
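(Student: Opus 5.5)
The approach is to exploit the linearity of the forward/backward system once the unknowns $\{s_i\}_{i\in S}$ are treated as free parameters. With $d_j=\sgn(A_{jj})\in\{0,\pm1\}$, \eq{forwardsubst-zeropiv} gives a lower-triangular recursion for $y_{i\mu}$ at every $i\notin S$, since there $L_{ii\mu}d_i\neq 0$. At $i\in S$ the left side vanishes, so \eq{forwardsubst-zeropiv} is no longer an equation for $y_{i\mu}$. Instead it becomes a consistency condition linking $b_i$, $s_i$ (coefficient $L_{ii\mu}\neq0$) and the earlier $y_{j\mu}$. Meanwhile $y_{i\mu}$ itself is undetermined.

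The plan proceeds in three steps.

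\textbf{Step 1: introduce the free parameters.} At each $i_p\in S$ I set $y_{i_p\mu}=\alpha_p$ as a free parameter. I then use the $i_p$ row of \eq{forwardsubst-zeropiv} to define $s_{i_p}$. Any mismatch between this value and the $s_{i_p}$ later produced by back-substitution is absorbed into a correction $\Delta b_{i_p}$. This is where the clause ``up to a correction restricted to $i\in S$'' enters. For $i\notin S$, \eq{ycomponent-mu} with $d_i^2=1$ gives $y_{i\nu}=d_iy_{i\mu}$, so the $s_i$ terms disappear. For $i\in S$ it gives $y_{i\nu}=-s_i$.

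\textbf{Step 2: split the solution by linearity.} Forward substitution is linear in $(\bfb,\bfalpha)$, so I write $\bfy=\tilde\bfy+\bfY\bfalpha$. Here $\tilde\bfy$ is obtained with $\bfalpha=0$ and solves \eq{forward} with right side $\bfb$. The columns of $\bfY$ are obtained with $\bfb=0$ and $\alpha_q=1$, so each satisfies $\bfL\bfY_{:q}=0$. By construction $Y_{i_pq}$ has $\mu$-component $\delta_{pq}$. Its $\nu$-component at $i\in S$ is tied to $s_i$, which is why the statement records only the $\mu$ part, $\delta_{pq}\mu$.

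\textbf{Step 3: back-substitute using only the $\mu$-component.} From \eq{yconstr1}, $L_{ii\mu}x_i=-y_{i\mu}-d_is_i$, where $s_i$ depends only on $x_j$ with $j>i$. This is an upper-triangular recursion with nonzero diagonal $L_{ii\mu}$, so it is always solvable and linear in $\bfy_\mu$. Applying it to $\tilde\bfy$ and to each column of $\bfY$ gives $\tilde\bfx$ and $\bfX$. By linearity, $\bfx=\tilde\bfx+\bfX\bfalpha$.

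It then remains to check that the $\nu$-component of \eq{backward} also holds. For $i\notin S$ this is automatic, because $y_{i\nu}=d_iy_{i\mu}$ makes the $\nu$-equation a multiple of the $\mu$-equation when $d_i^2=1$. For $i\in S$ it reads $s_i=-y_{i\nu}$, and this is exactly the consistency condition from Step 1 that is enforced through $\Delta\bfb$.

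The main obstacle is bookkeeping at indices $i\in S$. There the value of $s_i$ required by the forward row at $i$ must coincide with the $s_i$ produced by back-substitution, and I must confirm that any discrepancy can be placed entirely into $\Delta b_i$ for $i\in S$ without disturbing the rows $i\notin S$. I would handle this by writing the $i\in S$ rows as $|S|$ scalar equations in $\bfalpha$, linear after substituting $\bfx=\tilde\bfx+\bfX\bfalpha$. Any residual of this small system is then, by definition, the correction $\Delta\bfb$ supported on $S$.
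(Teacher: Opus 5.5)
Your proposal is correct and follows the paper's proof essentially step for step. You take $\alpha_p$ as the $\mu$-component of $y_{i_p}$, split $\bfy=\tilde\bfy+\bfY\bfalpha$, back-substitute using only the $\mu$-component, and push the residual of the resulting $|S|\times|S|$ system into $b_i$ for $i\in S$; your matching of the forward-row value of $s_i$ with the back-substituted one is exactly the paper's condition that $(h_i)_\nu$ vanish. Your real-coordinate bookkeeping via $s_i$ also makes explicit that the $\nu$-equation is redundant for $i\notin S$, which the paper only asserts, and the obstacle you flag resolves at once: in \eq{forwardsubst-zeropiv}, $b_i$ with $i\in S$ appears only in row $i$, which determines no $y_{j\mu}$, so the correction leaves $\tilde\bfx$ and $\bfX$ unchanged.
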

\begin{proof}
During forward-substitution, entries with $i\in S$ have $L_{ii} = \mu$, and \eq{forward} is solved by $y_i = -2g_i\nu$.  However, combining this with \eq{component-ratio}, we see that $h_i$ has only the $\nu$ component whereas $L_{ii}$ has only the $\mu$ component, making \eq{backward} unsolvable in general.  We can correct this by solving \eq{forward} with
\begin{equation}
\label{eq:alpha}
y_i = -2g_i\nu + \alpha_p\mu,
\end{equation}
where $\alpha_p$ is an unknown real value that will be constrained by requiring the solvability of \eq{backward}.  We suppose that this $i$ is the $p$th such element of $S$.   We have $|S|$ unknown real values $\alpha_p$, and
\begin{equation}
\bfy = \tilde\bfy + \bfY\bfalpha,
\end{equation}
where $\tilde\bfy$ is the solution to \eq{forward} with all $\alpha = 0$.  During forward-substitution, these satisfy the forward-substitution equations
\begin{align}
L_{ii} {\tilde y}_i &= b_i - \sum_{j=1}^{i-1}L_{ij}{\tilde y}_j \label{eq:y0fwd} \\
L_{ii} {\bfY}_i &= - \sum_{j=1}^{i-1}L_{ij}{\bfY}_j
\end{align}
where we choose symmetric division when $i \notin S$; when $i\in S$, we choose the $\nu$-only solution, except for the $p$th column of $\bfY$ where ${Y}_{ip} = \mu$.  (The previous elements in this column of $\bfY$ are all zero.) Together, these equations propagate our unknown $\alpha_p$ throughout $\bfy$.

During backward-substitution, the solution is expressed as $\bfx = \tilde\bfx + \bfX\bfalpha$, where
\begin{align}
\bar L_{ii} {\tilde x}_i &= \tilde h_i = {\tilde y}_i - \sum_{j=i+1}^K \bar L_{ji} {\tilde x}_j \label{eq:h0} \\
\bar L_{ii} {\bfX}_i &= \bfh_i = {\bfY}_i - \sum_{j=i+1}^K \bar L_{ji} {\bfX}_j.
\end{align}
We iterate these with just the $\mu$ component of $\bfL$, as (when these equations are solvable) it is redundant with the $\nu$ component.  For those $i\in S$, solvability imposes the additional constraint that the $\nu$ component of $h_i$ must vanish:
\begin{equation}
\label{eq:nuvanish}
(\tilde h_i)_\nu + (\bfh_i)_\nu\cdot\bfalpha = 0,
\end{equation}
where $(h)_\nu$ denotes the $\nu$-component of the hemiplex $h$. There is one such constraint for each element of $S$, resulting in an $|S|\times|S|$ matrix equation
\begin{equation}
\label{eq:Heq}
\bfH\bfalpha = -(\tilde\bfh_S)_\nu.
\end{equation}
If $\bfH$ itself is singular, this equation may not have a solution for the right hand side resulting from the provided $\bfb$.  In such cases, solving \eq{Heq} in a least-squares sense results in computation of both $\bfalpha$ and the residual $\Delta\bfh$.  From \eq{h0}, $({\tilde h}_i)_\nu = ({\tilde y}_i)_\nu$ plus an offset, and from \eq{y0fwd} $({\tilde y}_i)_\nu = -2b_i$ plus an offset. Hence the elements of $\Delta\bfh/2$ may be added to the $\{b_i\}$ with $i\in S$ to yield a feasible solution.
\end{proof}
It is worth noting that the $\bfalpha$ are equal to the negatives of the values of $\bfx$ at indexes $i\in S$, as from \eqs{backward}, \eqn{alpha}, and \eqn{nuvanish} we have $-\mu x_i = \alpha_p\mu + 0\nu$.

Furthermore, we have
\begin{corollary}
\label{thm:rank}
$\bfX$ and $(\bfY)_\mu$ have maximal rank.
\end{corollary}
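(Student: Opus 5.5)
The plan is to exhibit an identity-like block inside each matrix, using the structure of the forward and backward recursions for the columns of $\bfY$ and $\bfX$ at the indexes $i\in S$. For $(\bfY)_\mu$, the $p$th column is built by forward-substitution in which all entries before $i_p$ vanish. The homogeneous recursion has zero right-hand side up to that point, and we take symmetric division for $i\notin S$ and the $\nu$-only solution for $i\in S$, so each such $y_i$ is zero. At $i_p$ itself we set $Y_{i_p p}=\mu$.

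For a later index $i_q\in S$ with $q>p$, the forward step uses the $\nu$-only solution, so $(Y_{i_q p})_\mu=0$. Restricting $(\bfY)_\mu$ to the rows $\{i_q\}$ therefore gives exactly the $|S|\times|S|$ identity, since $Y_{i_p q}=\delta_{pq}\mu$ as stated in Theorem~\ref{thm:singular}. Hence $(\bfY)_\mu$ has rank $|S|$, which is maximal for a $K\times|S|$ matrix. This part is essentially bookkeeping; the only point to check is that the $\mu$-coefficient in rows $i_q$, $q\neq p$, is zero. That holds by construction for $q<p$ (the entry is zero) and for $q>p$ (the entry is $\nu$-only).

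For $\bfX$, I would use the remark following the proof of Theorem~\ref{thm:singular}. Look at the backward equation at $i\in S$, where $L_{ii}=\mu$ and $\bar L_{ii}=-\mu$, and take only its $\mu$-component. The off-diagonal terms $\bar L_{ji}$ with $j>i$ have the form $-L_{ji\nu}(d_i\mu+\nu)$, and $d_i=0$ there, so they contribute only to the $\nu$-component. The $\mu$-component of $\bfh_i$ is therefore exactly $(\bfY_i)_\mu$. Consequently $-X_{i_q p}=(Y_{i_q p})_\mu=\delta_{pq}$, so the rows of $\bfX$ indexed by $S$ form $-I$, and $\bfX$ has full column rank $|S|$. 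The same computation for $\tilde\bfx$ shows that $\bfalpha$ equals minus $\bfx_S$, which agrees with the remark after the theorem.

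The main obstacle I expect is this step for $\bfX$: confirming that, when only the $\mu$ component of \eq{backward} is used, nothing from $j>i$ leaks into the $\mu$-component at $i\in S$. This hinges on the convention \eq{component-ratio} with $d_i=0$. If a column of $\bfA$ was entirely zero and the diagonal was handled differently (e.g.\ $L_{ii}=\nu$), I would treat that case separately by the symmetric argument with the roles of $\mu$ and $\nu$ exchanged.
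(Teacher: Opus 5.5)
Your proof is correct. For $(\bfY)_\mu$ it is the paper's argument: the rows indexed by $S$ form the identity. Strictly, only the $\mu$-components satisfy $Y_{i_q p}=\delta_{pq}\mu$. Entries $Y_{i_q p}$ with $q>p$ are generally nonzero multiples of $\nu$, which your own derivation already accounts for.

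For $\bfX$ you take a different route. The paper argues structurally. Because $\bfX$ is defined by the $\mu$-component of the backward recursion, $(\bfL^*)_\mu\bfX=(\bfY)_\mu$. Hence $\bfX\mathbf{c}=0$ forces $(\bfY)_\mu\mathbf{c}=0$, and so $\mathbf{c}=0$.

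You instead use the fact that for $i\in S$ the off-diagonal entries $L_{ji}=L_{ji\nu}\nu$ have no $\mu$-component. The $\mu$-equation at row $i$ therefore decouples and gives $X_{i_q p}=-(Y_{i_q p})_\mu=-\delta_{pq}$, so the $S$-rows of $\bfX$ are exactly $-I$.

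Your version yields more information. The explicit $-I$ block is precisely the content of the paper's remark that $\bfalpha$ equals minus $\bfx$ on $S$. The paper's version is shorter and does not depend on the $\mu$-components vanishing in the columns $i\in S$. It needs only the linear relation $(\bfL^*)_\mu\bfX=(\bfY)_\mu$, so it would survive other conventions for the off-diagonal ratios.

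Your fallback for a diagonal $L_{ii}=\nu$ is unnecessary. With $d_j=\sgn(A_{jj})$ and \eq{L-definition}, every $i\in S$ has $L_{ii}=\mu$, as used in the proof of Theorem~\ref{thm:singular}.
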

\begin{proof}
If we extract the rows indexed by $S$ of $(\bfY)_\mu$, by construction we obtain the identity matrix.  Since the identity matrix has full rank, the columns of $(\bfY)_\mu$ (and therefore $\bfY$) must be independent.  Since $\bfX$ satisfies $(\bfL^*)_\mu\bfX = (\bfY)_\mu$, the columns of $\bfX$ must be independent.
\end{proof}
\begin{corollary}
\label{thm:ynull}
$\bfY$ spans the null space of $\bfL$ consistent with symmetric division.
\end{corollary}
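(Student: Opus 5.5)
The plan is to read Corollary~\ref{thm:ynull} as saying the following. Consider the set of hemiplex vectors $\bfv$ with $\bfL\bfv = 0$ that are produced by forward substitution under the convention of Theorem~\ref{thm:singular}: symmetric division at every $i\notin S$, and at $i\in S$ a free $\mu$-coefficient added to the $\nu$-only particular solution. This set is exactly the column span of $\bfY$. I will show two inclusions, $\mathrm{span}(\bfY)\subseteq\mathcal{N}$ and $\mathcal{N}\subseteq\mathrm{span}(\bfY)$, where $\mathcal{N}$ denotes this constrained null space. Linear independence is already supplied by Corollary~\ref{thm:rank}.

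The first inclusion is immediate from the construction in the proof of Theorem~\ref{thm:singular}. Each column $\bfY_{:,q}$ satisfies the homogeneous forward equations $L_{ii}Y_{iq} = -\sum_{j<i}L_{ij}Y_{jq}$, with symmetric division for $i\notin S$ and the $\nu$-only choice for $i\in S$, except that $Y_{i_q q}=\mu$. For this to give $\bfL\bfY_{:,q}=0$ I need one check at $i=i_q$: since $L_{i_q i_q}=\mu$ there and $\mu\cdot\mu=0$, adding $\mu$ leaves row $i_q$ of $\bfL\bfv$ unchanged. So each column is in $\mathcal{N}$.

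For the reverse inclusion, take any $\bfv\in\mathcal{N}$ and let $\beta_p$ be the $\mu$-coefficient of $v_{i_p}$. Set $\bfw=\bfv-\bfY\bfbeta$. Then $\bfw\in\mathcal{N}$, and $\bfw$ has zero $\mu$-component at every index in $S$, because the rows of $(\bfY)_\mu$ indexed by $S$ form the identity (Corollary~\ref{thm:rank}). I then show $\bfw=0$ by induction on $i$, using the lower-triangular structure. Suppose $w_j=0$ for all $j<i$. The right-hand side of row $i$ vanishes. If $i\notin S$, then $|L_{ii}|^2=L_{ii\mu}^2 d_i\neq 0$, so symmetric division gives $w_i=0$. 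If $i\in S$, the $\nu$-only solution of $\mu w_i=0$ forces the $\nu$-coefficient to be $0$, and the $\mu$-coefficient is $0$ by the subtraction, so again $w_i=0$. Hence $\bfv=\bfY\bfbeta$.

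The main obstacle is making precise what ``consistent with symmetric division'' means. Without that restriction the null space of $\bfL$ over $\mathbb{J}^K$ is much larger, since every pure-hemi entry has a one-parameter family of inverses and non-associativity breaks the usual linear-algebra intuition. I will therefore state explicitly that $\mathcal{N}$ consists of the solutions of the homogeneous forward recursion under the Theorem~\ref{thm:singular} conventions. Under that definition the induction goes through entrywise, and non-associativity never appears because each step involves only a single product $L_{ii}w_i$.
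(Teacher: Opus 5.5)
Your proof is correct and follows essentially the same route as the paper. Both arguments rest on two facts. At $i\notin S$, symmetric division determines the entry uniquely. At $i\in S$, the row equation with $L_{ii}=\mu$ fixes only the $\nu$-coefficient and leaves the $\mu$-coefficient free.

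The differences are in packaging. The paper runs the induction block by block over successive indices of $S$. You instead first subtract the matching combination of columns of $\bfY$ and then show the remainder vanishes entrywise. You also check explicitly the easy inclusion that the columns of $\bfY$ lie in the null space, which the paper takes as given by construction.
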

\begin{proof}
For $i\notin S$, given $b_1, \ldots, b_i$ and $y_1, \ldots, y_{i-1}$, the symmetric-division solution for $y_i$ is unique by \eq{inverse}.  Consequently, the only freedom is in choosing the values of the solution at $i\in S$.

Suppose $i$ is the smallest index in $S$.  Then by uniqueness $y_1, \ldots, y_{i-1}$ must be zero, and hence the $\nu$-component of $y_i$ must also be zero by $b_i = 0$.  As a consequence, between $i$ and the next-smallest index in $S$, $y$ must be proportional to the first column of $\bfY$.

Suppose up to the $p$th index $i$ in $S$, the solution is a linear combination of the first $p-1$ columns of $\bfY$.  Adding an independent nonzero $\nu$-component at the $p$th $i$ would likewise violate $b_i = 0$; consequently, only a $\mu$-component may be added, and thus up to the next index in $p$, the solution must be in the span of the first $p$ columns of $\bfY$.
\end{proof}
\begin{corollary}
\label{thm:nullspace}
The null space of $\bfA$ lies within the span of $\bfX$.
\end{corollary}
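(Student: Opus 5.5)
Take any $\bfx$ with $\bfA\bfx = 0$ and run it through the solution procedure of Theorem~\ref{thm:singular} with $\bfb = 0$. The aim is to show that $\bfx$ equals $\tilde\bfx + \bfX\bfalpha$ with $\tilde\bfx = 0$, so that $\bfx = \bfX\bfalpha$.

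\textbf{Step 1: $\bfy := \bfL^*\bfx$ lies in the null space of $\bfL$.} Since $\bfA = \bfL\bfL^*$ is obtained column by column through \eq{schur}, one has $\bfA\bfx = \bfL(\bfL^*\bfx)$ whenever $\bfx$ is real. Multiplication is non-associative, so this regrouping must be justified. I would do it entrywise: $(\bfA\bfx)_i = \sum_k \sum_j (L_{ij}\bar L_{kj})x_k$. Each $L_{ij}\bar L_{kj}$ is real, and $x_k$ is real. Real scalars associate freely with hemiplexes because $\Real$ is the center of $\mathbb{J}$ and scalar multiplication is bilinear. Hence $(L_{ij}\bar L_{kj})x_k = L_{ij}(\bar L_{kj}x_k)$, and summing over $k$ gives $(\bfL\bfy)_i$ with $y_j = \sum_k \bar L_{kj}x_k$. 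Therefore $\bfL\bfy = 0$.

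\textbf{Step 2: $\bfy$ has the symmetric-division form.} By construction, $\bfy$ satisfies the backward equations \eq{backward} with a real solution $\bfx$, and \eq{ycomponent-mu} then holds. For $d_i = \pm 1$, the term $(d_i^2-1)s_i$ vanishes, so $y_{i\nu} = d_i y_{i\mu}$. This gives $y_i \propto \mu + d_i\nu$, which is exactly the symmetric-division solution $L_{ii}^{-1}g_i$ from \eq{inverse}, since $\bar{L}_{ii}$ has the same ratio. For $i \in S$ we have $L_{ii} = \mu$, and $y_i$ may take any value consistent with \eq{forward}. So $\bfy$ is a null vector of $\bfL$ consistent with symmetric division.

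\textbf{Step 3: conclude.} By Corollary~\ref{thm:ynull}, Step 2 gives $\bfy = \bfY\bfalpha$ for some real $\bfalpha$ ($\bfalpha_p = (y_{i_p})_\mu$). With $\bfb = 0$, the particular solution has $\tilde\bfy = 0$ and hence $\tilde\bfx = 0$. The backward recursion of Theorem~\ref{thm:singular}, using only the $\mu$-component, is linear and has unique real solutions because each $L_{ii\mu} \neq 0$. It therefore maps $\bfY\bfalpha$ to $\bfX\bfalpha$. Since $\bfx$ itself solves $\bfL^*\bfx = \bfy$, including the $\mu$-components $(\bfL^*)_\mu\bfx = (\bfy)_\mu$, uniqueness of that triangular real system gives $\bfx = \bfX\bfalpha$, as used in Corollary~\ref{thm:rank}.

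\textbf{Main obstacle.} The delicate step is Step 1: establishing $\bfA\bfx = \bfL(\bfL^*\bfx)$ without associativity, together with checking that the Schur-complement construction really yields $A_{ik} = \sum_j L_{ij}\bar L_{kj}$. I would handle it with the centrality-of-reals argument above, noting that every regrouping involves one factor that is real ($x_k$ or the product $L_{ij}\bar L_{kj}$). A secondary point is triangularity of $(\bfL^*)_\mu$ with nonzero diagonal $-L_{ii\mu}$, which \eq{L-definition} guarantees.
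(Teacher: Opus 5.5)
Your proposal is correct and follows the paper's proof step for step. The chain is: $\bfy=\bfL^*\bfx$ is a null vector of $\bfL$ consistent with symmetric division, Corollary~\ref{thm:ynull} places it in the span of $\bfY$, and $(\bfY)_\mu=(\bfL^*)_\mu\bfX$ then forces $\bfx$ into the span of $\bfX$. What you add are justifications the paper leaves implicit: the regrouping $\bfA\bfx=\bfL(\bfL^*\bfx)$ despite non-associativity (valid because each regrouping involves a real factor), the check via \eq{ycomponent-mu} that $d_i=\pm1$ forces the symmetric-division form of $y_i$, and the invertibility of the triangular $(\bfL^*)_\mu$ with nonzero diagonal $-L_{ii\mu}$.
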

\begin{proof}
For any $\bfv \in \mathbb{R}^K$, $\bfy = \bfL^* \bfv$ is consistent with symmetric division.  If $\bfv$ is in the null space of $\bfA$, then $\bfL\bfy$ is all-zero.  By Corollary~\ref{thm:ynull}, $\bfy$ must be in the span of $\bfY$.  Since $(\bfY)_\mu = (\bfL^*)_\mu\bfX$, $\bfv$ must lie within the span of $\bfX$.
\end{proof}

Algorithms~\ref{alg:forward}--\ref{alg:backward} implement these operations.

\begin{algorithm}
\caption{Forward-substitution. $\tilde\bfy$ is a vector of pure-hemi numbers with the necessary multiplication rules defined; $/$ means symmetric division.}% $L_{i,j}$ refers to the value extracted from the returned matrix from algorithm~\ref{alg:decomposition}, converted into a hemiplex number according to algorithm~\ref{alg:representation}.}
\label{alg:forward}
\begin{algorithmic}
  \For{$i\gets 1\textrm{ to }K$}
    \State $g\gets b_i$
    \For{$j\gets 1\textrm{ to }i-1$}
      \State $g\gets g-L_{ij}\tilde y_j$
    \EndFor
    \If{$d_i = 0$}
      \State $\tilde y_i\gets -2g\nu$
    \Else
      \State $\tilde y_i\gets g/L_{ii}$
    \EndIf
  \EndFor
\end{algorithmic}
\end{algorithm}

\begin{algorithm}
\caption{Forward-substitution to define the symmetric-division null space of $\bfL$. Here $\bfg$ is a vector of length $|S|$ (the number of $d_i$ equal to zero), and $\bfY$ is $K\times |S|$ and is initialized with all-zeros. $v_{1:k}$ denotes the first $k$ entries of vector $\bfv$; where needed for disambiguation, here a comma is used to separate the two indices of a matrix.}
\begin{algorithmic}
  \State $s\gets 0$ \Comment{The number of $d_i=0$ encountered so far}
  \For{$i\gets 1\textrm{ to }K$}
    \State $\bfg\gets 0$
    \For{$j\gets 1\textrm{ to }i-1$}
      \State $g_{1:s}\gets g_{1:s}-L_{ij} Y_{j,1:s}$
    \EndFor
    \If{$d_i = 0$}
      \State $Y_{i,1:s}\gets -2\bfg_{1:s}\nu$
      \State $s\gets s+1$
      \State $Y_{is}\gets \mu$
    \Else
      \State $Y_{i,1:s}\gets g_{1:s}/L_{ii}$
    \EndIf
  \EndFor
\end{algorithmic}
\end{algorithm}

\begin{algorithm}
\caption{Backward-substitution. Here $\bfx$, $\bfy$ refer to either $\tilde\bfx$, $\tilde\bfy$ or $\bfX$, $\bfY$. This is written assuming a single column of $\bfx$ and $\bfy$, but each operation may be iterated over columns. $\bfH$ refers to the vector/matrix for solving for the constraints, \eq{Heq}.}
\label{alg:backward}
\begin{algorithmic}
  \State $s\gets 0$ \Comment{The number of $d_i=0$ encountered so far}
  \For{$i\gets K\textrm{ to } 1$}
    \State $h\gets -y_i$
    \For{$j\gets i+1\textrm{ to }K$}
      \State $h\gets h- L_{ji}x_j$
    \EndFor
    \If{$d_i = 0$}
      \State $x_i\gets (h)_\mu$
      \State $s\gets s+1$
      \State $H_{s}\gets (h)_\nu$
    \Else
      \State $x_i\gets (h)_\mu/(L_{ii})_\mu$
    \EndIf
  \EndFor
\end{algorithmic}
\end{algorithm}

\section{Experiments and performance analysis}

A Julia implementation of the hemiplexes and their application to Cholesky factorization may be found at \url{https://github.com/timholy/HemiplexNumbers.jl} and \url{https://github.com/timholy/HemiplexFactorizations.jl}.  This code, and all pseudocode contained in this manuscript, are released under the permissive MIT license.

These packages implement the decomposition in section~\ref{sec:decomposition}, as well as a variant incorporating diagonal pivoting to place the largest-magnitude diagonal element in the first position.  For the unpivoted algorithm, computational performance was enhanced by implementing blocking (all experiments used a block size of 32 columns).  Both algorithms performed the subtraction of $\bfell\otimes\bfell$ via calls to the level-2 BLAS functions \texttt{xSYRK} and \texttt{xSYR2K}.  Note that the latter, necessary when the block's $d_i$ values are heterogenous, may introduce a penalty compared to a (hypothetical) \texttt{xSYRK} implementation allowing one to pass a vector of diagonals rather than a single scalar $\alpha$.

Comparisons were made using the Julia library functions for Cholesky (\texttt{cholfact}), Bunch-Kaufman (\texttt{bkfact}), and eigenvalue decomposition (\texttt{eigfact}), and solution via the \texttt{\textbackslash} operator.  These dispatch to standard LAPACK implementations of Cholesky (\texttt{xPOTRF} and \texttt{xPOTRS}), Bunch-Kaufman (\texttt{xSYTRF} and \texttt{xSYTRS}), and eigendecomposition (\texttt{xSYEVR}) algorithms.  We used both the traditional and the rook-pivoting (\texttt{xSYTRF\_ROOK}) variants of Bunch-Kaufman.  Comparison to eigenvalue decomposition was motivated by the observation that it represents a widely-used approach for truncating singular or nearly-singular directions when solving symmetric linear equations.  Run times were measured using \texttt{BenchmarkTools.jl}'s \texttt{@belapsed} macro. LAPACK/BLAS operations used 8 threads on an Intel i7-14700KF with 20 cores.  The hemiplex Cholesky factorization truncated diagonals at $10K\epsilon |\bfA|_\infty$, where $\epsilon$ is the machine precision and $|\bfA|_\infty$ is the maximum-magnitude element of $\bfA$; the solution via eigenfactorization was truncated similarly, but using the eigenvalues rather than entries of $\bfA$.

Test matrices included random matrices $\bfQ\bfLambda\bfQ^T$ constructed by the method of Stewart\cite{Stewart1980}, where the diagonal matrix $\bfLambda$ satisfied\cite{fang2011stability}
\begin{equation}
|\lambda_i| = \beta^i, i = 1,\ldots,r-1, \qquad \lambda_r = 1
\end{equation}
where $r$ is the rank, $\beta = \sigma^{1/(r-1)}$ for some $0 < \sigma \le 1$.  All other diagonals from $r$ to $K$, the size of the matrix, were zero.  We used $\sigma = 1, 10^{-3}, \ldots, 10^{-12}$.  In some experiments, the sign of $\lambda_i$ was chosen randomly or, for positive-definite matrices, as $+1$.  We also considered special matrices: Hadamard matrices, Toeplitz matrices, and the matrices studied by Druinsky \& Toledo\cite{Druinsky2011} in their analysis of stability of Bunch-Kaufman factorization, in both the singular and non-singular variants (with final element $A_{KK} = 0$).

\figref{factorization_performance} shows the factorization performance of the various algorithms as a function of matrix size.  Run time was similar to LAPACK's Bunch-Kaufman despite the fact that key elements of the hemiplex algorithm are implemented as single-threaded computations in Julia rather than optimized LAPACK implementations.  On positive-definite matrices, it is not as fast as the ordinary Cholesky algorithm; since the operations are almost mathematically identical, this difference seems likely to be just an implementation issue.  The pivoted algorithm, as implemented here, is slower because it does not use blocking; however, it is still far faster than eigenvalue decomposition.

\begin{figure}
\centering \includegraphics[width=10cm]{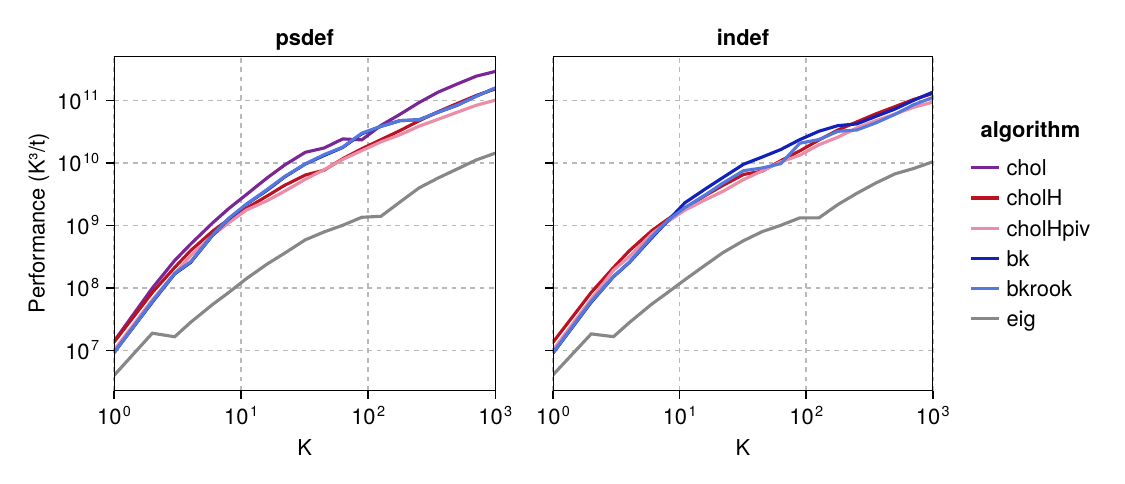}
\caption{\label{fig:factorization_performance}Factorization performance (higher is better) for positive-semidefinite (psdef) and indefinite (indef) $K\times K$ random matrices using different factorization algorithms. $t$ is the execution time of the factorization.}
\end{figure}

For solving equations, the right hand side $\bfb$ was generated from $\bfx_\star = [1,1,\ldots]$ and multiplying by $\bfA$.  For the hemiplex factorizations with singularities, the least-squares solution was generated.  The median accuracy, measured by $|\bfA\bfx - \bfb|$ (\texttt{norm} in Julia), is shown in \figref{accuracy1} using 30 random matrices for each condition.  For random matrices, one sees that the unpivoted hemiplex factorization was consistently among the least accurate, and that the pivoted hemiplex factorization was consistently among the most accurate.  For the random matrices, the rank as estimated from the pivoted hemiplex factorization was correct in every tested case.

\begin{figure}
\centering \includegraphics[width=\textwidth]{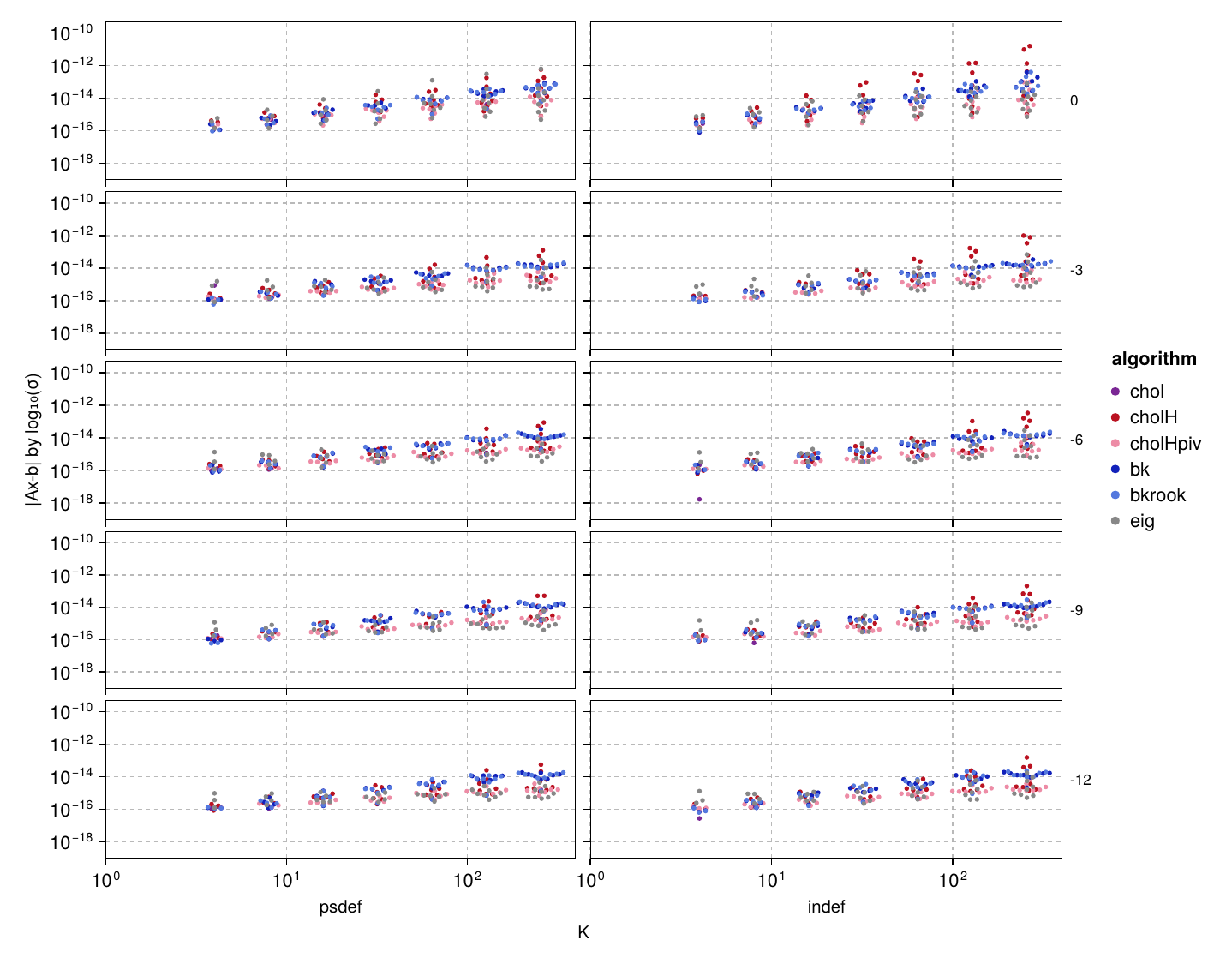}
\caption{\label{fig:accuracy1}Norm of residual in solution (lower is better) with different factorization algorithms for random positive-semidefinite (psdef) and indefinite (indef) matrices, as a function of $\log_{10}\sigma$.  Each condition was tested with ranks $r$ having all integer-powers $p$ of 2 up to $2^p = K$.  Points are offset along the $x$-axis to avoid overlap; in all cases $K$ was a power of 2 (maximum 256).}
\end{figure}

The results for specialized matrices are shown in \figref{accuracy2}. Notably, the larger Druinsky-Toledo matrices (both singular and nonsingular) caused both the traditional and rook-pivoting Bunch-Kaufman LAPACK factorizations to issue singularity errors. However, both hemiplex Cholesky factorizations succeeded, and for the pivoted algorithm the (pseudoinverse) solution was of reasonable accuracy.

\begin{figure}
\centering \includegraphics[width=8cm]{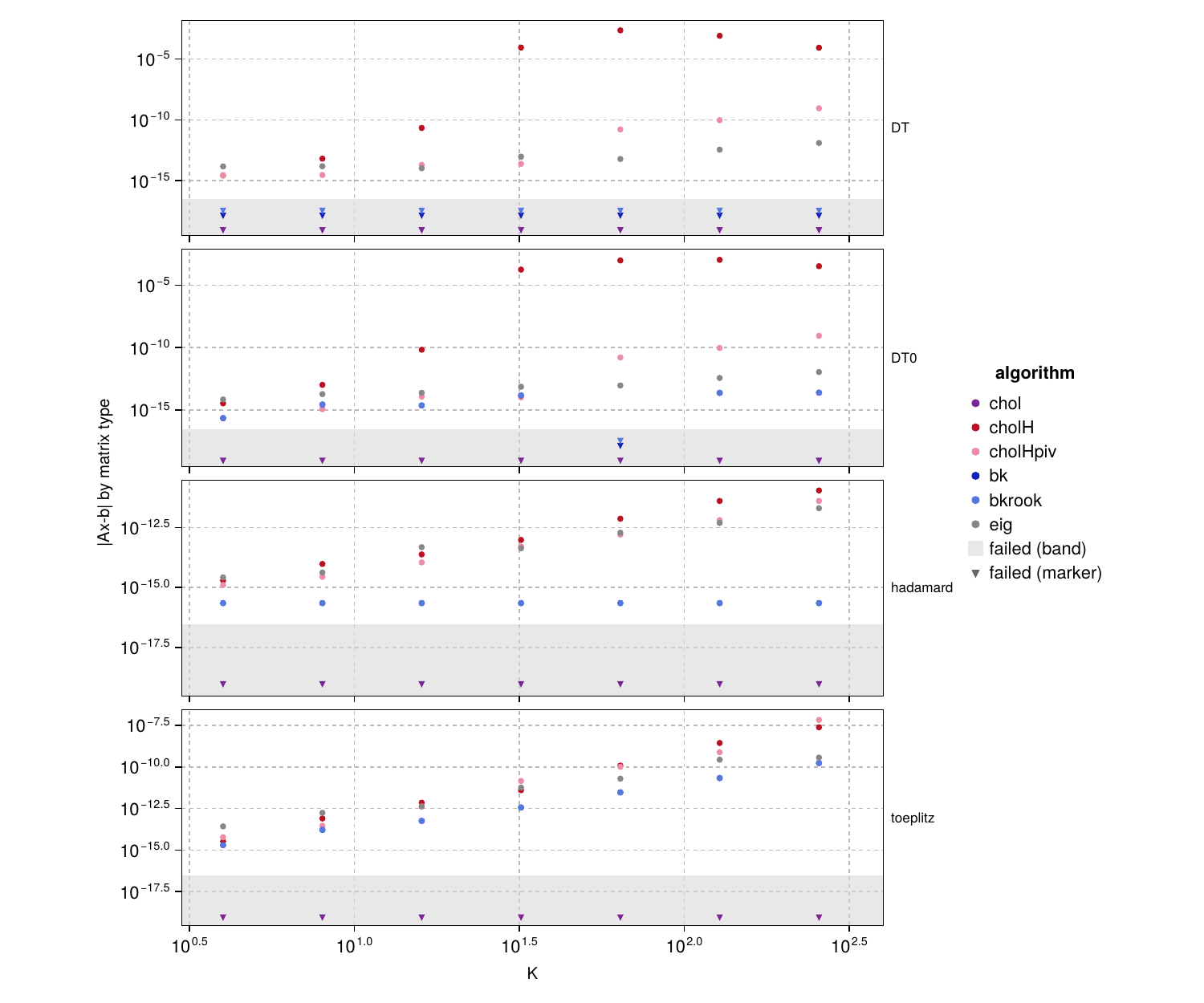}
\caption{\label{fig:accuracy2}Norm of residual in solution (lower is better) with different factorization algorithms for special matrices: Druinsky-Toledo (DT), nonsingular Druinsky-Toledo (DT0), Hadamard matrices, and Toeplitz matrices.  Points are offset along the $x$-axis to avoid overlap. For the purposes of display, machine-$\epsilon$ was added to each value, as Bunch-Kaufman factorization was exact (zero norm) for the Hadamard matrices.  Some algorithms (real-valued Cholesky, and both Bunch-Kaufman) failed to factor the matrix, in which case the corresponding dot is absent.}
\end{figure}

\section{Discussion}

Real numbers have been extended in many ways, including the complex numbers, split-complex (also called hyperbolic or duplex) numbers, dual numbers, quaternions, and Clifford algebras.  The hemiplexes obey a simple algebra, and are one of 26 non-isomorphic three-dimensional Jordan algebras over the reals\cite{Kashuba2016}. 
However, we have not been able to find any prior efforts that demonstrate their utility for practical problems.

The most unusual property of the hemiplexes is their non-associative multiplication.  In some contexts, this seems likely to present a challenge for analytical progress. For matrix factorization as a pair $\bfA = \bfB \bfC$, only two numbers appear in any product, and hence the non-associativity plays little obvious role.  For numeric computation, non-associativity seems to be less of a concern, because immediate computation implicitly imposes grouping without any extra cost.  The lack of a unique multiplicative inverse---a side effect of their non-associativity---plays a much more explicit role in this work, but unexpectedly we found that this can be an advantage: it allows one to naturally introduce what is effectively ``bookkeeping'' to be resolved later.

Corollary~\ref{thm:nullspace}, together with the ability to ``truncate'' some near-zero diagonals, endow the hemiplex Cholesky factorization with some properties of the QR and singular value decompositions:  it solves any symmetric matrix equation, near-singular directions may be discarded to prevent contamination of solutions by roundoff error, and least-squares solutions may be obtained by projecting out those components of $\bfx$ and $\bfb$ that are parallel to the null space of $\bfA$.  A very large matrix of nearly full rank results in a small system of equations for $\bfalpha$, so the extra cost of handling a matrix with a nonzero null space is due to effectively solving for several right-hand sides, an $O(K^2)$ operation typically much smaller than the $O(K^3)$ needed for factorization. There are, however, pathological cases: a symmetric tridiagonal matrix with all-zeros on the diagonal will lead to $K$ zero-pivots, in which case hemiplex factorization offers no advantage. In ``typical'' cases, the fact that the Cholesky decomposition can be calculated more efficiently than other factorizations suggests that this approach may have practical advantages.

An additional feature of the hemiplex factorization is that no pivoting is required for its existence.  For dense matrices, this is presumably of only minor benefit, as pivoting is $O(K^2)$; moreover, as shown here, pivoting can still improve the accuracy of solutions.  For sparse matrices, a major advantage of Cholesky factorization is its preservation of banded structure.  For such matrices, pivoting is usually designed by reference to structure but not to the values of the nonzero entries.  Consequently, particularly for sparse matrices, the fact that pivoting is not essential may be an additional benefit.

We close by highlighting one important open question. From the standpoint of numeric stability, it would be very desirable to allow arbitrary $-1 \le d_i \le 1$ so that \eq{L-definition} could be made continuous. The authors note that
\begin{equation}
    r_{kj} = \frac{|A_{jj} A_{kk}|}{A_{jk}^2}
\end{equation}
is a scale-invariant measure of the ``positive-definiteness gap'' of the $j, k$ principal $2\times 2$ positive-diagonal submatrix, and setting $d_j = \sgn{A_{jj}} \min(1, \max_{k > j, A_{jk} \neq 0} r_{kj})$ might be an attractive choice. This would result in solutions similar to \eq{example2}. However, implementing this would require new methods for making the necessary choices during forward substitution.  

\appendix
\section{Jordan Algebras}
\label{sec:Jordan}

It is well known that the algebra of complex numbers $a+bi$  is isomorphic to the algebra
of real $2\times 2$ matrices of the form
$$
\begin{pmatrix}
a & -b \\
b & a
\end{pmatrix}.
$$
Thanks to the popularity of automatic differentiation, dual numbers
introduced in 1873 by William Clifford
are becoming more familiar.  These are numbers of the form $a+b\epsilon$,
where $\epsilon^2=0$. These are also isomorphic to a subalgebra of
real $2\times 2$  matrices, namely
$$
\begin{pmatrix}
a & b \\
0  & a
\end{pmatrix}.
$$

One might ask if the hemiplexes can be realized as a subalgebra of 
$2\times 2$ real matrices. The answer is yes, but with
a matrix product redefined to make multiplication commutative.
Hemiplexes  correspond to  $2\times 2$ real matrices with equal diagonal,
with the commutative multiplication:
\begin{equation}
\label{multcomm}
 A \circ B \equiv (AB+BA)/2,
 \end{equation}
where ordinary matrix multiplication is on the right side of the definition.

The association is
\begin{equation}
a + m\mu + n\nu  \sim
\begin{pmatrix}
a & m \\
-n & a
\end{pmatrix}.
\end{equation}
Therefore $1$ corresponds to $I_2$ and $\mu$ and $\nu$ correspond to
$
\begin{pmatrix}
0 & 1 \\
0 & 0
\end{pmatrix}
$
and
$
\begin{pmatrix}
0  & 0 \\
-1 & 0
\end{pmatrix}
$
respectively.

The  algebra can also be characterized by
$e_1=1$, $e_2=\mu-\nu$, $e_3=\mu+\nu$, with the observation
that $e_2^2=-e_3^2=e_1$ and $e_1  e_i=e_i$ for $i=1,2,3$.

The hemiplexes are an example of a Jordan algebra\cite{jordan1933}. Jordan algebras are commutative, but not necessarily associative, but satisfy rather the weaker relationship
$$x^2(xy) =x(x^2 y).$$  They are named for the 20th century German  physicist
Pascual Jordan, not to be confused with the famous French mathematician Camille
Jordan.

There is a nice list of small dimensional Jordan algebras in \cite{Kashuba2016}.
The hemiplexes appear there as ${\cal J}_3 $ in the list of five  real  semi-simple Jordan
algebras of dimension 3.

\section*{Author contributions}
TEH designed the hemiplexes, derived the algorithms and proofs for Cholesky factorization, wrote the software, and performed the numerical experiments. AE discovered the connection to the Jordan algebras and identified the correct conjugation rule.

\section*{Acknowledgments}

The authors are grateful to Jiahao Chen for useful conversations, and Jack Poulson for advice regarding algorithm implementation and the Druinsky-Toledo test matrices.
We also thank Dav Vogan for discussions about Jordan Algebras.
The authors also acknowledge the Julia community for providing a programming language that makes it easy to play with unusual types of numbers, and for demonstrating in other contexts how specialized numbers can be practically useful.

\bibliographystyle{siamplain}
\bibliography{refs1}

\end{document}